\newtheorem{theorem}{Theorem}[section]
\newtheorem{lemma}{Lemma}[section]
\newtheorem{corollary}{Corollary}[section]
\def\bp{\begin{proof}}
\def\ep{\end{proof}}
\begin{document}

\title[On stable CMC hypersurfaces with free-boundary in a Euclidean Ball]{On stable CMC hypersurfaces with free-boundary in a Euclidean Ball}

\author{Ezequiel Barbosa}
\address{Universidade Federal de Minas Gerais (UFMG), Departamento de Matem\'{a}tica, Caixa Postal 702, 30123-970, Belo Horizonte, MG, Brazil}
\email{ezequiel@mat.ufmg.br}
\thanks{The author partially supported by CNPq-Brazil.}

\begin{abstract}
 In this note, we observe that if $B$ is a ball in a Euclidean space with dimension $n$, $n\geq3$, then a stable CMC hypersurface $\Sigma$ with free boundary in $B$ satisfies
 \[
nA\leq L\leq nA\left( \frac{1+\sqrt{1+4(n+1)H^2}}{2} \right)\,, 
 \]
where $L$, $A$ and $H$ denote the length of $\partial \Sigma$, the area of $\Sigma$ and the mean curvature of $\Sigma$, respectively. Consequently, if the boundary   
$\partial \Sigma$ is embedded then $\Sigma$ must be totally geodesic or starshaped with respect to the center of the ball. This result is an improvement of a theorem proved by A. Ros and E. Vergasta \cite{R-V} . In particular, if $n=3$, the only stable CMC surfaces with free boundary in $B$ are the totally geodesic disks or the spherical caps. This last result was proved very recently by I. Nunes \cite{N} using an extended stability result and a modified Hersch type balancing argument to get a better control on the genus. We don't use that modified Hersch type argument. However, we use a Nunes type Stability Lemma and a crucial result due to A. Ros and E. Vergasta.
\end{abstract}

\maketitle

\section{Introduction}\label{intro}
Given a smooth compact and convex domain $B$ in $\mathbb{R}^{n+1}$, denote by $\partial B$ and $int\,B$ the boundary and the interior of $B$, respectively. A CMC free-boundary hypersurface in $B$ is a constant mean curvature hypersurface $\Sigma \subset B$ meeting $\partial B$ orthogonally along $\partial \Sigma$. That kind of hypersurfaces are solutions for the problem of finding critical points of the area functional among all compact hypersurfaces $\Sigma \subset B$ with $\partial \Sigma \subset \partial B$ which divides $B$ into two subsets of prescribed volumes. If a CMC free-boundary hypersurface $\Sigma \subset B$  has nonnegative second variation of area for all preserving volume variations we name it as a CMC free-boundary stable hypersurface. For more details about CMC free-boundary hypersurfaces, see the following references and references therein: \cite{N}, \cite{R}, \cite{R-V}, \cite{So}.

In \cite{R-V}, Ros and Vergasta studied stable CMC hypersurfaces with free boundary when $B$ is a ball and proved the following result. Denote by $L$ the length of the boundary $\partial \Sigma$ and by $A$ the area of $\Sigma$. 

\begin{theorem}[Ros-Vergasta \cite{R-V}]
Let $B\subset\mathbb{R}^n$, $n\geq3$, be a closed ball. Let $\Sigma \subset B$ be a CMC free-boundary stable hypersurface with embedded boundary in $B$. If $L\geq nA$ then $\Sigma$ is totally geodesic or starshaped with respect to the center of the ball.
\end{theorem}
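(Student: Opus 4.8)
The plan is to combine the second variation (stability) inequality with two privileged families of competitors---the support function $u=\langle x,\nu\rangle$ and the coordinate functions---together with a Minkowski-type divergence identity, and then to read the dichotomy off a maximum principle for $u$. Normalizing $B$ to be the unit ball and writing $\Sigma$ for the $n$-dimensional hypersurface, $\nu$ for its unit normal and $\eta$ for the outward conormal of $\partial\Sigma$ in $\Sigma$, stability means
\[
Q(f,f)=\int_\Sigma\big(|\nabla f|^2-|A|^2f^2\big)\,dA-\int_{\partial\Sigma}f^2\,ds\ \ge\ 0
\]
for every $f$ with $\int_\Sigma f\,dA=0$; the boundary coefficient is $II_{\partial B}(\nu,\nu)=1$ for the unit sphere. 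As a consistency check, the totally geodesic disk is stable, and its translational Jacobi fields $\langle x,a\rangle$ realize equality, which both fixes the sign of the boundary term and identifies the expected equality case.

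Next I would exploit the support function $u=\langle x,\nu\rangle$. Along $\partial\Sigma$ orthogonality gives $x=\eta$, hence $u=\langle\eta,\nu\rangle=0$, so $u$ vanishes on the boundary; the CMC condition yields $\mathcal{J}u:=\Delta u+|A|^2u=nH$. Integrating the identity $\operatorname{div}_\Sigma x^\top=n(1-Hu)$ and using $u|_{\partial\Sigma}=0$ together with $\langle x^\top,\eta\rangle=1$ on $\partial\Sigma$ produces the balancing identity
\[
L=nA-nH\int_\Sigma u\,dA .
\]
Testing $Q$ with $u$ itself (a legitimate $H^1$ competitor, even if not yet volume preserving) and using $u|_{\partial\Sigma}=0$ gives $Q(u,u)=-nH\int_\Sigma u=L-nA$, so that the hypothesis $L\ge nA$ is exactly the statement $nH\int_\Sigma u\le 0$, together with the slack $Q(u,u)=L-nA\ge 0$.

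I would then feed the coordinate functions into stability. Writing $\bar x=\tfrac1A\int_\Sigma x\,dA$, the functions $f_i=x_i-\bar x_i$ are volume preserving, satisfy $\sum_i|\nabla f_i|^2=n$, and summing $Q(f_i,f_i)\ge0$ gives
\[
nA\ \ge\ \int_\Sigma|A|^2|x-\bar x|^2\,dA+\int_{\partial\Sigma}|x-\bar x|^2\,ds .
\]
When the centroid coincides with the center of the ball this collapses, using $|x|=1$ on $\partial\Sigma$, to $\int_\Sigma|A|^2|x|^2\le nA-L\le0$, forcing $|A|\equiv0$, i.e.\ the totally geodesic case. To capture the remaining (starshaped) alternative I would return to the Dirichlet problem $\mathcal{J}u=nH$, $u|_{\partial\Sigma}=0$: the aim is to show that $L\ge nA$ forces the first Dirichlet eigenvalue of $-\mathcal{J}=-\Delta-|A|^2$ to be nonnegative, for then the positivity-preserving property of $(-\mathcal{J})^{-1}$ makes $u$ of constant sign (so $\langle x,\nu\rangle$ never vanishes and $\Sigma$ is starshaped with respect to the center), while $H=0$ degenerates to $u\equiv0$ and $\Sigma$ totally geodesic.

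I expect the main obstacle to be precisely this last passage. Stability controls $Q$ only on the codimension-one space of volume-preserving functions, so converting it into an unconstrained spectral statement---equivalently, ruling out a sign change of $u$---is not formal: the first Dirichlet eigenfunction and the nodal pieces $u^\pm$ of a sign-changing $u$ are not mean-zero, and the naive nodal-domain competitor $u^+-cu^-$ need not have $Q<0$ for the value of $c$ that enforces the volume constraint. The real work is therefore to keep, rather than discard, the centroid terms $\int_{\partial\Sigma}x\,ds$ and $\int_\Sigma|A|^2x\,dA$ in the coordinate-function estimate, couple them to the balancing identity and to $Q(u,u)=L-nA$, and thereby manufacture an admissible competitor that violates stability whenever $u$ changes sign without $\Sigma$ being totally geodesic.
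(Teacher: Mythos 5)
Your setup is the right one, and it is essentially the route Ros--Vergasta take (the paper itself does not reprove this theorem, but it reproduces the key step inside the proof of Theorem \ref{main}): the support function $u=\langle x,\nu\rangle$ vanishes on $\partial\Sigma$ and satisfies $\Delta u+|A|^2u=nH$, the Minkowski identity converts $L\ge nA$ into a sign condition on $nH\int_\Sigma u$, and $Q(u,u)=L-nA$. But your write-up has a genuine gap exactly where you flag one: you never establish that $u$ has a sign, and that is the entire content of the theorem (constant sign of $u$ plus the strong maximum principle gives starshapedness; $H=0$ gives the totally geodesic alternative). The coordinate-function inequality and the remark about the centroid coinciding with the center do not help, since nothing forces the centroid to be at the center; and the detour through nonnegativity of the first Dirichlet eigenvalue of $-\Delta-|A|^2$ is not extractable from $L\ge nA$ any more easily than the sign of $u$ itself. (You also do not say where embeddedness of $\partial\Sigma$ enters; it is needed to pass from $u>0$ on the interior to genuine starshapedness.)

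Moreover, your stated reason for distrusting the nodal-domain competitor is mistaken --- that competitor is precisely how the argument closes. Since $u^{+}$ and $u^{-}$ have disjoint supports and vanish on $\partial\Sigma$ and on the nodal set, integration by parts gives exactly $Q(u^{\pm},u^{\pm})=-nH\int_\Sigma u^{\pm}$ in your sign convention, with no cross or boundary terms. Choosing $a>0$ so that $\tilde u=u^{+}+au^{-}$ satisfies $\int_\Sigma\tilde u=0$ (so $\int_\Sigma u^{+}=-a\int_\Sigma u^{-}$), one computes $Q(\tilde u,\tilde u)=-nH\bigl(\int_\Sigma u^{+}+a^{2}\int_\Sigma u^{-}\bigr)=naH\int_\Sigma u$, and stability forces $nH\int_\Sigma u\ge 0$; the hypothesis $L\ge nA$ gives $nH\int_\Sigma u\le 0$, hence $nH\int_\Sigma u=0$, $L=nA$, and equality in the stability inequality. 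Equality makes $\tilde u$ a smooth solution of $\Delta\tilde u+|A|^2\tilde u=\mathrm{const}$, whereas on $\Sigma^{+}$ and $\Sigma^{-}$ it satisfies that equation with the two distinct constants $nH$ and $anH$; unless $H=0$ (totally geodesic case) or $a=1$, this is a contradiction, and the residual cases are settled by the equality analysis. This is the step the paper invokes as ``as in Ros--Vergasta, pag.~29,'' and without carrying it out your proposal does not prove the theorem.
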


In order to improve the result above, we use a Nunes type Stability Lemma (see Lemma \ref{L}) to prove that always $L\geq nA$. More precisely, we obtain the following result.

\begin{theorem}\label{main}
Let $B\subset\mathbb{R}^n$, $n\geq3$, be a closed ball. If $\Sigma \subset B$ is a CMC free-boundary stable hypersurface in $B$,
then 
\[
nA\leq L\leq nA\left( \frac{1+\sqrt{1+4(n+1)H^2}}{2} \right)\,.
\]
In particular, if $\partial \Sigma$ is embedded, then $\Sigma$ is totally geodesic or starshaped with respect to the center of the ball.
\end{theorem}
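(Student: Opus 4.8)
The plan is to play the stability inequality against a Minkowski-type identity. Write $x$ for the position vector of $\Sigma$, $N$ for its unit normal, $S$ for the second fundamental form, and set $n=\dim\Sigma$, so that $A$ and $L$ are the $n$- and $(n-1)$-dimensional volumes of $\Sigma$ and $\partial\Sigma$. Free-boundary stability is the statement that
\[
\int_\Sigma\bigl(|\nabla\phi|^2-|S|^2\phi^2\bigr)\,dA+\int_{\partial\Sigma}\phi^2\,dL\ge0
\]
for every $\phi$ with $\int_\Sigma\phi\,dA=0$, the boundary term arising from the second fundamental form of the unit sphere $\partial B$. Independently, the computation $\Delta_\Sigma\frac{|x|^2}{2}=n+H\langle x,N\rangle$ together with the free-boundary relation $\langle x,\nu\rangle\equiv1$ on $\partial\Sigma$ (the outward conormal $\nu$ agrees with $x$ there) integrates, via the divergence theorem, to the Minkowski identity
\[
L=nA+H\int_\Sigma\langle x,N\rangle\,dA .
\]
Hence $L\ge nA$ is equivalent to $H\int_\Sigma\langle x,N\rangle\,dA\ge0$, with equality for the totally geodesic disk, where $\langle x,N\rangle\equiv0$.

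For the lower bound I would use the support function $f=\langle x,N\rangle$ as test function; this is the heart of the Nunes-type Stability Lemma~\ref{L}. It is the right choice for two reasons: along the free boundary $N$ is tangent to $\partial B$ and $x$ is normal to it, so $f\equiv0$ on $\partial\Sigma$; and, $H$ being constant, $f$ satisfies $\Delta_\Sigma f+|S|^2 f=-H$. Substituting $\phi=f-\bar f$, where $\bar f=\tfrac1A\int_\Sigma f$ restores the zero-average constraint, and integrating by parts with $f|_{\partial\Sigma}=0$, the interior terms telescope and the stability inequality reduces to a relation among $\bar f$, $\int_\Sigma|S|^2$ and the single boundary integral $\int_{\partial\Sigma}\langle S\nu,\nu\rangle\,dL$. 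The free-boundary geometry supplies the identity $\langle S\nu,\nu\rangle=H-H_{\partial\Sigma}$, where $H_{\partial\Sigma}$ is the mean curvature of $\partial\Sigma$ inside $\partial B$; controlling $\int_{\partial\Sigma}H_{\partial\Sigma}$ is exactly where the crucial estimate of Ros and Vergasta \cite{R-V} enters, and it forces $H\bar f\ge0$, i.e. $L\ge nA$.

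Feeding $L\ge nA$ back, the upper bound follows by estimating the curvature integral through the pointwise Cauchy--Schwarz bound $|S|^2\ge H^2/n$ and rearranging the same stability inequality into a quadratic in $t=L/(nA)$ of the form $t^2-t\le(n+1)H^2$; its positive root gives $L\le nA\bigl(\tfrac{1+\sqrt{1+4(n+1)H^2}}{2}\bigr)$, completing the two-sided estimate. With $L\ge nA$ now unconditional, the Ros--Vergasta theorem quoted above applies whenever $\partial\Sigma$ is embedded and yields that $\Sigma$ is totally geodesic or starshaped about the center. The step I expect to be hardest is precisely the control of the boundary term $\int_{\partial\Sigma}\langle S\nu,\nu\rangle\,dL$: its sign is what selects the correct direction of the lower bound, and isolating it without a Hersch-type balancing---using instead that the support function vanishes on $\partial\Sigma$---is the technical gain of this test function over the coordinate functions.
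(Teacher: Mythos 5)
There is a genuine gap, and it sits exactly at the heart of the theorem: the lower bound $L\geq nA$. Your plan is to test stability with $\phi=u-\bar u$, where $u=\langle x,N\rangle$ and $\bar u$ is its mean, and to claim the resulting inequality ``forces $H\bar u\geq 0$'' once a Ros--Vergasta boundary estimate controls $\int_{\partial\Sigma}H_{\partial\Sigma}$. This does not work. First, your stability form has the boundary term with the wrong sign: for the unit ball it is $-\int_{\partial\Sigma}\Pi(N,N)\phi^2=-\int_{\partial\Sigma}\phi^2$, so convexity makes stability \emph{harder}, not easier. Second, after substituting $\phi=u-\bar u$ and integrating by parts with $\Delta u+|S|^2u=-nH$, the interior terms do not telescope: you are left with $nHA\bar u\geq \bar u^2\bigl(\int_\Sigma|S|^2+L\bigr)-2\bar u\int_\Sigma|S|^2u$, whose right-hand side has no definite sign, and there is no Ros--Vergasta estimate on $\int_{\partial\Sigma}H_{\partial\Sigma}$ that closes it --- indeed Ros and Vergasta had to \emph{assume} $L\geq nA$ as a hypothesis precisely because this naive route fails. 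The paper's proof instead rests on the Nunes-type Stability Lemma: for any $f$ vanishing on $\partial\Sigma$, one has $\int_\Sigma|\nabla f|^2-|S|^2f^2\,dvol_\Sigma\geq \frac{1}{n+1}\bigl(\int_\Sigma f/A\bigr)^2\int_{\partial\Sigma}\Pi(N,N)\,ds\geq 0$ \emph{without any mean-zero constraint}. Applied to $u$, whose Jacobi equation gives $\int_\Sigma|\nabla u|^2-|S|^2u^2=nH\int_\Sigma u$, this yields $H\int_\Sigma u\geq 0$ and hence $L\geq nA$ in one line. The proof of that lemma is itself a balancing argument with the ambient coordinate functions $f_i=\langle e_i,N\rangle$ (which satisfy $\Delta f_i+|S|^2f_i=0$ and $\sum f_i^2=1$, so $\sum_i I(f_i,f_i)=-\int_{\partial\Sigma}\Pi(N,N)<0$, forcing some $I(f_i,f_i)\leq-\frac{1}{n+1}\int_{\partial\Sigma}\Pi(N,N)$); your stated intention to avoid any balancing argument removes exactly the ingredient that makes the lower bound unconditional.

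The upper bound has a related problem. The paper does not use $|S|^2\geq H^2/n$ anywhere; the constant $n+1$ in the quadratic $t^2-t\leq(n+1)H^2$ comes from the number of ambient coordinates in the lemma above, and the factor $L$ multiplying $\bigl(\int_\Sigma u/A\bigr)^2$ comes from $\int_{\partial\Sigma}\Pi(N,N)\,ds=L$ for the unit ball. A Cauchy--Schwarz bound on $|S|^2$ produces neither of these. You also need to know $\int_\Sigma u\neq 0$ and that $u$ has a sign before the Minkowski identity $\int_\Sigma u/A=(L-nA)/(nHA)$ can be fed into the quantitative stability bound; the paper establishes this via the Ros--Vergasta nodal-decomposition argument (showing $u$ cannot change sign) followed by the maximum principle, a step your proposal omits entirely. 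Your Minkowski identity, the identification of the support function as the right test function, the final quadratic in $L/(nA)$, and the appeal to Ros--Vergasta for starshapedness are all correct and match the paper; what is missing is the entire mechanism that justifies the two stability inequalities you invoke.
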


As a direct consequence, we obtain the following corollary.
\begin{corollary}
Let $B\subset\mathbb{R}^n$, $n\geq3$, be a closed ball. If $\Sigma \subset B$ is a CMC stable hypersurface with embedded free-boundary in $B$ and $0\in \Sigma$ then $\Sigma$ is totally geodesic.
\end{corollary}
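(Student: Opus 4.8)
The plan is to deduce the corollary directly from Theorem \ref{main} by ruling out the starshaped alternative whenever the center lies on $\Sigma$. First I would invoke Theorem \ref{main}: since $\partial\Sigma$ is embedded, $\Sigma$ is either totally geodesic or starshaped with respect to the center $0$ of the ball. In the first case there is nothing to prove, so the entire argument reduces to showing that the hypothesis $0\in\Sigma$ is incompatible with $\Sigma$ being starshaped about $0$.

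To carry this out I would recall what starshapedness means in terms of the support function. Writing $X$ for the position vector field along $\Sigma$ (with the center of $B$ taken as the origin) and $N$ for a unit normal field, starshapedness of $\Sigma$ with respect to $0$ is equivalent to the support function $\langle X, N\rangle$ having a fixed sign and never vanishing on $\Sigma$; geometrically, every radial segment issuing from $0$ meets $\Sigma$ transversally in exactly one point, so the radial direction is nowhere tangent to $\Sigma$. I would make this equivalence precise in the sense used by Ros and Vergasta, since it is exactly the notion in which Theorem \ref{main} produces its dichotomy.

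Next I would evaluate the support function at the center. The assumption $0\in\Sigma$ means there is a point $p\in\Sigma$ with $X(p)=0$, and therefore $\langle X, N\rangle(p)=\langle 0, N(p)\rangle=0$. This contradicts the nonvanishing of the support function established in the starshaped case. Hence $\Sigma$ cannot be starshaped with respect to $0$, and by the dichotomy of Theorem \ref{main} it must be totally geodesic.

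I expect the only delicate point to be the identification of \emph{starshaped with respect to the center} with the nonvanishing of $\langle X, N\rangle$: one must check that this is the precise sense in which the conclusion of Theorem \ref{main} (inherited from Ros--Vergasta) is phrased, and that a starshaped free-boundary hypersurface genuinely separates the center from itself, so that indeed $0\notin\Sigma$. Once this definitional link is fixed, everything else is an immediate evaluation and the proof is very short.
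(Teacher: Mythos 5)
Your argument is correct and is exactly the intended deduction: the paper states this corollary without proof as a direct consequence of Theorem \ref{main}, and the proof of that theorem shows that in the non--totally-geodesic case the support function $u=\langle\psi,N\rangle$ is strictly positive on the interior of $\Sigma$, which is the sense of ``starshaped with respect to the center'' being used. The one refinement to your delicate point is that $u$ always vanishes on $\partial\Sigma$ (by the free boundary condition), so the nonvanishing must be read on $\mathrm{int}\,\Sigma$; since the center $0$ lies in the interior of $B$ and $\partial\Sigma\subset\partial B$, any point of $\Sigma$ mapping to $0$ is an interior point where $u=0$, and your contradiction goes through.
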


Now, noting that free-boundary surfaces must have embedded boundary (see Theorem 11  in \cite{R-V}), we obtain a complete topological classification.

\begin{corollary}\label{COR}
Let $B\subset\mathbb{R}^3$ be a closed ball. If $\Sigma \subset B$ is a CMC stable surface with free-boundary then $\Sigma$ is a totally geodesic disk or a spherical cap. 
\end{corollary}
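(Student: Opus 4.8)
The plan is to deduce Corollary \ref{COR} directly from Theorem \ref{main} together with the topological input quoted just before the statement. Since we work in $\mathbb{R}^3$, i.e.\ the case $n=3$ of Theorem \ref{main}, the first thing I would record is that the embeddedness hypothesis needed to run Theorem \ref{main} is automatic here: by Theorem~11 of \cite{R-V}, every free-boundary surface $\Sigma\subset B\subset\mathbb{R}^3$ has embedded boundary $\partial\Sigma$. Thus the hypotheses of Theorem \ref{main} are met with no extra assumption, and its conclusion gives that $\Sigma$ is either totally geodesic or starshaped with respect to the center $O$ of the ball.

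It then remains to identify each alternative. In the totally geodesic case the argument is immediate: a totally geodesic surface of $\mathbb{R}^3$ lies in an affine plane $\Pi$, and the free-boundary (orthogonality) condition along $\partial\Sigma\subset\partial B$ forces $\Pi$ to contain $O$; hence $\Sigma$ is an equatorial, i.e.\ totally geodesic, disk. This disposes of the $H=0$ possibility.

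The substantive case is the starshaped one, and this is where I expect the main work to sit. Here I would argue that a surface which is starshaped with respect to $O$ is a radial graph and therefore embedded, so that $\Sigma$ is an embedded CMC surface meeting $\partial B$ orthogonally along $\partial\Sigma$. For such surfaces one may invoke the classification of starshaped stable free-boundary CMC surfaces in a round ball due to Ros and Vergasta \cite{R-V}: an Alexandrov-type moving-plane reflection with planes through $O$ forces $\Sigma$ to be rotationally symmetric about an axis through the center, and the rotationally symmetric free-boundary CMC surfaces in $B$ are exactly the spherical caps. Hence $\Sigma$ is a spherical cap.

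The only genuine obstacle is this final identification in the starshaped case, since everything else is either quoted (Theorem \ref{main}, Theorem~11 of \cite{R-V}) or elementary (the planar case). Combining the two cases yields that $\Sigma$ is either a totally geodesic disk or a spherical cap, which is precisely the assertion of Corollary \ref{COR}.
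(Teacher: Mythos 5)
Your first two steps coincide with the paper's: Theorem~11 of \cite{R-V} gives embeddedness of $\partial\Sigma$, Theorem \ref{main} then yields that $\Sigma$ is totally geodesic or starshaped with respect to the center, and the totally geodesic case is handled as you say. The gap is exactly where you locate the ``main work'': the starshaped case. There is no classification of starshaped stable free-boundary CMC surfaces via moving planes in \cite{R-V} to invoke, and the Alexandrov reflection you describe does not run in a ball: a family of ``planes through $O$'' cannot be moved while remaining through $O$, and the usual family of parallel planes coming in from $\partial B$ loses the property that the reflected region stays inside $B$ exactly when the plane reaches the center --- this obstruction is precisely why the Alexandrov-type classification of embedded free-boundary CMC surfaces in a ball is not a routine reflection argument. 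Moreover, even if one axis of rotational symmetry were established, your final assertion that the rotationally symmetric free-boundary CMC surfaces in $B$ are exactly the spherical caps is false: the critical catenoid is a rotationally symmetric free-boundary minimal (hence CMC) annulus in the ball that is neither a disk nor a cap, and there are analogous Delaunay-type rotational annuli for $H\neq 0$. A further warning sign is that your final step makes no use of stability at all.

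The paper closes the starshaped case by a much more elementary, purely topological observation: a surface starshaped with respect to $O$ is a radial graph over a domain of the unit sphere and therefore has genus $0$. This is combined with the structural content of the very same Theorem~11 of \cite{R-V} already used for embeddedness of the boundary, namely that a compact orientable stable CMC surface with free boundary in a ball of $\mathbb{R}^3$ is a totally geodesic disk, a spherical cap, or a surface of genus $1$ (with at most two boundary components). Genus $0$ rules out the third alternative, so $\Sigma$ is a disk or a cap. Replacing your moving-plane step by this genus argument repairs the proof; the remainder of your proposal is correct.
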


Its worthy to mention that the Corollary \ref{COR} was proved recently by I. Nunes \cite{N} using a powerful stability result and a modified Hersch type balancing argument to get a better control on the genus and on the number of connected components of the boundary of the surface. In fact, I. Nunes proved a more general result which, joint with Theorem 11  in \cite{R-V}, gives us the result above as a corollary.

\begin{theorem}[I. Nunes \cite{N}]
Let $\Omega\subset \mathbb{R}^3$ be a smooth compact convex domain. Suppose that the second fundamental form $\Pi^{\partial \Omega}$ of $\partial \Omega$ satisfies the pinching condition
\[
k\,h\leq \Pi^{\partial \Omega}\leq (3/2) k\,h\,.
\]
for some constant $k > 0$, where $h$ denotes the induced metric on $\partial \Omega$. If $\Sigma \subset \Omega$ is an immersed orientable compact stable CMC surface with free boundary, then $\Sigma$ has genus zero and $\Sigma$ has at most two connected components.
\end{theorem}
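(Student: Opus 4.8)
The plan is to test the second-variation form of $\Sigma$ against Hersch-balanced conformal coordinate functions and to read off the topology from the resulting inequality, using the pinching only to control the boundary integrals. Since $\Omega$ is flat, stability of $\Sigma$ means
\[
\int_\Sigma |\nabla f|^2\,dA \;\ge\; \int_\Sigma |A|^2 f^2\,dA \;+\; \int_{\partial\Sigma}\Pi^{\partial\Omega}(N,N)\,f^2\,ds
\]
for all $f$ with $\int_\Sigma f\,dA=0$, where $N$ is the unit normal of $\Sigma$ and $A$ its second fundamental form. Following a Nunes-type stability lemma, I would first remove the mean-zero restriction for a suitable finite-dimensional family of test functions (or subtract their averages), so that the three coordinate functions produced below become admissible.

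For the test functions I would fix the conformal structure induced on $\Sigma$, form the closed double $\widehat\Sigma$ by reflecting across $\partial\Sigma$, and record that $\chi(\widehat\Sigma)=2\chi(\Sigma)$, so that $\widehat\Sigma$ has genus $2g+b-1$, where $g$ and $b$ denote the genus and number of boundary circles of $\Sigma$. A Yang--Yau gonality bound then furnishes a conformal branched cover $\widehat\Sigma\to S^2\subset\mathbb{R}^3$ of degree $d\le \lfloor(2g+b+2)/2\rfloor$; restricting to $\Sigma$ gives maps $\phi=(\phi_1,\phi_2,\phi_3)$ with $|\phi|^2\equiv 1$, and composing with a conformal automorphism of $S^2$ (Hersch's lemma) lets me balance $\phi$ so that each $\phi_i$ is admissible for the upgraded stability form.

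Substituting $f=\phi_i$, summing over $i$, and using that the Dirichlet energy is conformally invariant in dimension two, the left-hand side equals twice the area of the spherical image counted with multiplicity and is therefore at most $4\pi d$, a purely topological quantity. On the right, $\sum_i\phi_i^2\equiv 1$ collapses the bulk term to $\int_\Sigma |A|^2$ and the boundary term to $\int_{\partial\Sigma}\Pi^{\partial\Omega}(N,N)$. I would then invoke the Gauss equation $|A|^2=4H^2-2K$ and the Gauss--Bonnet formula $\int_\Sigma K+\int_{\partial\Sigma}k_g=2\pi(2-2g-b)$, together with the free-boundary identity $k_g=\Pi^{\partial\Omega}(T,T)$ along $\partial\Sigma$ (where $T$ is the unit tangent, so the conormal of $\Sigma$ is the outward normal of $\partial\Omega$). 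The pinching $k\,h\le\Pi^{\partial\Omega}\le(3/2)k\,h$ then pins each boundary quantity, namely $\int_{\partial\Sigma}\Pi^{\partial\Omega}(N,N)$ and $\int_{\partial\Sigma}\Pi^{\partial\Omega}(T,T)$, between $kL$ and $(3/2)kL$, and the whole inequality reduces to a bound whose only free content is an upper estimate on $2g+b$.

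The hard part is twofold. First, the balancing is delicate: one must center the image to make the $\phi_i$ admissible while keeping the branched-cover degree as small as the gonality bound permits, and the existence of a balancing automorphism requires a Brouwer-degree (intermediate-value) argument on the conformal group. Second, and more seriously, the final numerical inequality closes only because of the exact constant $3/2$: the discarded nonnegative term $4H^2 A$ and the precise comparison between the degree $d$ and $2g+b$ leave essentially no slack, so one must track every constant sharply rather than estimate crudely. A crude pass yields only $2g+b\le 6$; it is precisely at the pinching ratio $3/2$ that the sharpened estimate forces $g=0$ and $b\le 2$, degenerating for larger ratios.
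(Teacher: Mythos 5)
This statement is quoted in the paper as an external result of I.~Nunes \cite{N}; the paper gives no proof of it (indeed the author stresses that he \emph{avoids} the modified Hersch argument, proving the ball case instead via the Nunes-type stability lemma applied to the support function $u=\langle\psi,N\rangle$, which vanishes on $\partial\Sigma$ when $B$ is a ball, combined with the Ros--Vergasta criterion $L\ge nA$). So there is no in-paper proof to compare against, and your sketch must stand on its own as a reconstruction of \cite{N}. Two problems with it. First, the role of the Nunes-type lemma is misstated: that lemma upgrades stability only for test functions \emph{vanishing on} $\partial\Sigma$, whereas your balanced coordinate functions $\phi_i$ do not vanish there; they are admissible because Hersch balancing makes $\int_\Sigma\phi_i=0$, so the lemma contributes nothing to this part of the argument, and you never say where it is actually needed.

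Second, and decisively, the accounting you describe does not close, and you never locate where the upper bound $\Pi^{\partial\Omega}\le(3/2)k\,h$ enters. Carrying out your own steps: summing the stability inequality over $i$ with $\sum_i\phi_i^2\equiv1$, using $|A|^2=4H^2-2K$, Gauss--Bonnet, and $k_g=\Pi^{\partial\Omega}(T,T)$ gives
\[
8\pi d\;\ge\;4H^2A-4\pi\chi(\Sigma)+2\int_{\partial\Sigma}\Pi^{\partial\Omega}(T,T)\,ds+\int_{\partial\Sigma}\Pi^{\partial\Omega}(N,N)\,ds\;\ge\;-4\pi\chi(\Sigma)+3kL,
\]
where only the \emph{lower} pinching bound has been used. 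With $\chi=2-2g-b$ and $d\le g+1+\lfloor b/2\rfloor$ this reads $3kL\le 8\pi d+4\pi\chi\le 16\pi$, a constraint on the scale-invariant quantity $kL$ and not on the topology: it is consistent with every value of $g$ and $b$. So your assertion that the inequality ``reduces to a bound whose only free content is an upper estimate on $2g+b$'' is false as written, and the claim that ``a crude pass yields $2g+b\le6$'' is asserted rather than derived. This is not a presentational slip: if the lower bound $\Pi^{\partial\Omega}\ge k\,h$ alone sufficed, the theorem would hold for every smooth strictly convex body (take $k$ the minimum principal curvature), which is the open Ros--Vergasta problem. The entire content of Nunes' theorem is hidden in how the ratio $3/2$ is exploited --- through an additional identity tying $kL$ to $A$, $H$ and the degree (a Minkowski/first-variation type formula, or a finer balancing) --- and that is precisely the step your proposal waves at (``one must track every constant sharply'') without supplying.
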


In order to prove Theorem \ref{main}, we apply the same idea as that applied by I. Nunes in the proof of the main result for the free-boundary surfaces case in \cite{N}. I. Nunes showed that the stability of a 
free-boundary CMC surface implies that the quadratic form
given by the second variation of area is nonnegative for all functions $f$
such that $f=0$ on $\partial\Sigma$ regardless of whether it satisfies $\int_{\Sigma}fdvol_{\Sigma}=0$ or
not. That is what we are calling Nunes Stability Lemma. Then I. Nunes was able to apply a modified Hersch type balancing argument to obtain a better control on the genus of $\Sigma$. We use that idea for high dimension combined with some Ros-Vergasta results.

\section{Nunes Type Stability Lemma}

Let $B$ be a compact convex domain in $\mathbb{R}^{n+1}$. Let $\varphi : \Sigma^n \rightarrow B$ be an immersion  of a smooth orientable manifold $\Sigma$ with boundary $\partial \Sigma$ such that $\varphi(\partial \Sigma)=\varphi(\Sigma)\cap \partial B$. Let's denote the unit normal vector of the hypersurface $\Sigma$ by $N$.  The immersion $\varphi$ is called free boundary if $\varphi(\Sigma)$ meets $\partial B$ orthogonally. The second fundamental form $A$ of $\Sigma$ is the endomorphim $A(X)=-\nabla_X N$, where $X \in T\Sigma$. The mean curvature of $\Sigma$ is then given by $H=\frac{1}{n}\text{Tr}A$.

If we consider a smooth variation $\phi: \Sigma\times [0,\varepsilon) \rightarrow B$ that preserves $\partial B$ and such that $\phi(\cdot, 0)=\varphi(\cdot)$ then it natural to consider the following two functions: 
\[A(t)=\int_{\Sigma} dvol_{\Sigma_t} \quad \text{and}\quad V(t)=\int_{\Sigma \times [0,t]} \phi^*dvol_{\mathbb{R}^{n+1}}. \]
The variation $\phi$ is called volume preserving if $V(t)\equiv 0$.
Let $f$ be the function defined by $f= \langle \frac{\partial}{\partial t}\phi(x, 0), N(x)\rangle$ where $x \in \Sigma$ then the first variation formula yields: 
\[A^{\prime}(0)=- n \int_{\Sigma} H f dvol_{\Sigma}+\int_{\partial \Sigma} f \langle N, \nu \rangle ds \quad 
\text{and} \quad V^{\prime}(0)= \int_{\Sigma}f dvol_{\Sigma}.\]
It follows that CMC hypersurfaces with free boundary are critical points for the area functional $A(t)$ when restricted to volume preserving variations. The converse is also true, see \cite{R-V}. If $A^{\prime \prime}(0)\geq 0$ is nonnegative for every volume preserving variation then the immersion $\phi$ is called Stable CMC. It can be shown that this is equivalent to have for every $f \in C^{\infty}(\Sigma)$ with $\int_{\Sigma}f dvol_{\Sigma}=0$ that
\[I(f,f)= \int_{\Sigma} |\nabla f|^2- |A_{\Sigma}|^2f^2 dvol_{\Sigma} - \int_{\partial \Sigma}\Pi(N,N)f^2 ds \geq 0\]

\begin{lemma}[Nunes Type Stability Lemma]\label{L}
Let $\Sigma$ be an immersed stable hypersurface with constant mean curvature with free boundary in $B$. If $f \in C^{\infty}(\Sigma)$ is such that $f(x)=0$ for every $x \in \partial \Sigma$ then
\begin{eqnarray*}
I(f,f)= \int_{\Sigma} |\nabla f|^2- |A_{\Sigma}|^2 f^2 dvol_{\Sigma}\geq \frac{1}{n+1}\left( \frac{\int_{\Sigma}fdvol_{\Sigma}}{A} \right)^2\int_{\partial\Sigma}\Pi(N,N)ds.
\end{eqnarray*}
In particular, If $f \in C^{\infty}(\Sigma)$ is such that $f(x)=0$ for every $x \in \partial \Sigma$ then
\begin{eqnarray*}
I(f,f)= \int_{\Sigma} |\nabla f|^2- |A_{\Sigma}|^2 f^2 dvol_{\Sigma}\geq 0\,.
\end{eqnarray*}
\end{lemma}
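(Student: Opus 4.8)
The plan is to use the translational Jacobi fields of $\Sigma$ to manufacture, out of the given $f$, a legitimate (i.e. mean-zero) test function for the stability inequality, arranged so that its boundary term reproduces the right-hand side. Write $N_i=\langle N,e_i\rangle$ for the components of the unit normal with respect to the standard basis $\{e_i\}_{i=1}^{n+1}$ of $\mathbb{R}^{n+1}$. Since the ambient space is flat and translations preserve the mean curvature, each $N_i$ is a Jacobi field: $\Delta_\Sigma N_i+|A_\Sigma|^2N_i=0$. Denote by $I(\cdot,\cdot)$ the symmetric bilinear form polarizing the index form. Two elementary facts drive everything. First, integrating by parts and using $f|_{\partial\Sigma}=0$ together with $\Delta_\Sigma N_i+|A_\Sigma|^2N_i=0$ shows that $I(f,N_i)=0$ for every $i$; that is, any function vanishing on $\partial\Sigma$ is $I$-orthogonal to all the $N_i$. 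Second, since $\sum_i N_i^2=|N|^2=1$ and $\sum_i|\nabla N_i|^2=|A_\Sigma|^2$, summing the diagonal entries telescopes to the trace identity
\[
\sum_{i=1}^{n+1} I(N_i,N_i) = -\int_{\partial\Sigma}\Pi(N,N)\,ds,
\]
which is the source of the dimensional constant $\tfrac{1}{n+1}$.

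Next I would extract a good single corrector by a pigeonhole step. The identity above says the symmetric matrix $M_{ij}=I(N_i,N_j)$ has trace $-\int_{\partial\Sigma}\Pi(N,N)\,ds$, so its lowest eigenvalue $\mu$ satisfies $\mu\le \tfrac{1}{n+1}\operatorname{tr}M=-\tfrac{1}{n+1}\int_{\partial\Sigma}\Pi(N,N)\,ds$. Fixing a unit eigenvector $c=(c_1,\dots,c_{n+1})$ for $\mu$ and setting $\psi=\langle c,N\rangle=\sum_i c_iN_i$, one gets a Jacobi field with $I(\psi,\psi)=\mu$ and $I(f,\psi)=0$. Put $m=\int_\Sigma f\,dvol_\Sigma$ and $b=\int_\Sigma N\,dvol_\Sigma\in\mathbb{R}^{n+1}$. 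If $m=0$ the claim is immediate from stability, and the essential case is $\int_{\partial\Sigma}\Pi(N,N)\,ds>0$ (which holds when $B$ is a ball), so assume both; then $\mu<0$. The corrector should be the multiple of $\psi$ sharing the average of $f$, namely $w=\frac{m}{\langle c,b\rangle}\,\psi$, where $\langle c,b\rangle=\int_\Sigma\psi\,dvol_\Sigma$.

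The one delicate point — which I expect to be the main obstacle — is precisely that $\langle c,b\rangle\ne0$, for otherwise $w$ is not defined. I would settle this by contradiction: if $\langle c,b\rangle=0$ then $\psi$ has zero mean, so stability would force $I(\psi,\psi)\ge0$, contradicting $I(\psi,\psi)=\mu<0$. Hence $\langle c,b\rangle\ne0$, $w$ is well defined, and $\int_\Sigma(f-w)\,dvol_\Sigma=0$. Applying the stability inequality to the admissible function $f-w$, and using $I(f,\psi)=0$ and $f|_{\partial\Sigma}=0$,
\[
0 \le I(f-w,f-w) = I(f,f) + I(w,w) = I(f,f) + \frac{m^2}{\langle c,b\rangle^2}\,\mu,
\]
whence $I(f,f)\ge -\frac{m^2}{\langle c,b\rangle^2}\,\mu\ge \frac{m^2}{\langle c,b\rangle^2}\cdot\frac{1}{n+1}\int_{\partial\Sigma}\Pi(N,N)\,ds$.

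Finally, Cauchy–Schwarz gives $|\langle c,b\rangle|\le |c|\,|b|=|b|\le \int_\Sigma|N|\,dvol_\Sigma=A$, so $\langle c,b\rangle^{-2}\ge A^{-2}$ and
\[
I(f,f) \ge \frac{1}{n+1}\left(\frac{m}{A}\right)^2 \int_{\partial\Sigma}\Pi(N,N)\,ds,
\]
which is the assertion, the left side being $\int_\Sigma|\nabla f|^2-|A_\Sigma|^2f^2\,dvol_\Sigma$ because $f$ vanishes on $\partial\Sigma$. Since $B$ is convex, $\Pi(N,N)\ge0$, so the bound is nonnegative and yields the ``in particular'' statement at once. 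Beyond the nonvanishing of $\langle c,b\rangle$, the only remaining work is routine verification against the paper's sign conventions: the Jacobi identity $\Delta_\Sigma N_i+|A_\Sigma|^2N_i=0$, the orthogonality $I(f,N_i)=0$, and the signs in the boundary term of $I$.
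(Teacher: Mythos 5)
Your proposal is correct and follows essentially the same route as the paper: translational Jacobi fields $N_i=\langle e_i,N\rangle$, the trace identity $\sum_i I(N_i,N_i)=-\int_{\partial\Sigma}\Pi(N,N)\,ds$, selection of a Jacobi field with index value at most $\tfrac{1}{n+1}$ of that trace, the stability-based argument that its integral is nonzero, application of stability to a mean-zero combination with $f$ (using $I(f,N_i)=0$), and the final H\"older/Cauchy--Schwarz bound by $A$. The only (harmless) variation is that you take the lowest eigenvalue of the full matrix $I(N_i,N_j)$ where the paper pigeonholes on the diagonal entries, and you rescale the Jacobi field rather than $f$; these are equivalent.
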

\begin{proof}
Let $f_i$ be the function $f_i= \langle e_i, N \rangle$ where $
\{e_i\}$ is the canonical orthonormal basis of $\mathbb{R}^{n+1}$. A simple computation yields:
\begin{eqnarray}\label{jacobi function}
\Delta f_i + |A_{\Sigma}|^2 f_i=0.\end{eqnarray}
Plugging these functions on the quadratic form $I$ we have:
\begin{eqnarray*}
\sum_{i=1}^{n+1} I(f_i,f_i)&=&\int_{\Sigma}\sum_{i=1}^{n+1} |\nabla f_i|^2 - |A_{\Sigma}|^2f_i^2 dvol_{\Sigma} - \int_{\partial \Sigma}\sum_{i=1}^{n+1}\Pi(N,N)f_i^2 ds\\
&=& - \sum_{i=1}^{n+1} \int_{\Sigma} f_i\Delta f_i + |A_{\Sigma}|^2 f_i dvol_{\Sigma} + \int_{\partial \Sigma} \sum_{i=1}^{n+1}f_i \frac{\partial f_i}{\partial \nu} - \Pi(N,N) ds \\
&=& \int_{\partial \Sigma}\frac{1}{2}\frac{\partial}{\partial \nu}(\sum_{i=1}^{n+1} f_i^2)ds - \int_{\partial \Sigma}\Pi(N,N) ds=-\int_{\partial \Sigma}\Pi(N,N) ds.
\end{eqnarray*}
We have used that $\sum_{i=1}^{n+1}f_i^2= |N|^2=1$. It follows that, given a function $f$ such that $f=0$ on $\partial \Sigma$, at least one of the $f_i$ have the property that 

\[
I(f_i,f_i) \leq - \frac{1}{n+1}\int_{ \partial \Sigma}\Pi(N,N)ds < 0 \quad \text{and}\quad f_i\neq f. 
\] 
In fact, if for each $f_i$ we have $I(f_i,f_i) > - \frac{1}{n+1}\int_{ \partial \Sigma}\Pi(N,N)ds$ or $f_i=f$ then we obtain that 
\[
\sum_{i=1}^{n+1} I(f_i,f_i)> - \frac{m}{n+1}\int_{\partial \Sigma}\Pi(N,N) ds
\]
for some positive integer $m\leq\ n+1$, since when $f_i=f$ we obtain $I(f_i,f_i)=0$. This gives us the contradiction 
\[
-\int_{\partial \Sigma}\Pi(N,N) ds=\sum_{i=1}^{n+1} I(f_i,f_i)> - \frac{m}{n+1}\int_{\partial \Sigma}\Pi(N,N) ds\,.
\]
Hence, let $f_i$ be the function satisfying that condition. Note that, because of the stability of $\Sigma$, we have that $\int_{\Sigma}f_idvol_{\Sigma}\neq 0$. Assume that $\int_{\Sigma}fdvol_{\Sigma}\neq0$. Now, consider the function $\bar{f}=c f$, where
\[
c=\frac{\int_{\Sigma}f_idvol_{\Sigma}}{\int_{\Sigma}fdvol_{\Sigma}}\,.
\]
We have  $\int_{\Sigma}( \bar{f} - f_i) \,dvol_{\Sigma}=0$. Using (\ref{jacobi function}) and that $\bar{f}=0$ at $\partial \Sigma$ we have
\[0 \leq  I(\bar{f}-f_i,\bar{f}-f_i)= I(\bar{f},\bar{f}) - 2 I(\bar{f},f_i) + I(f_i,f_i)\leq I(\bar{f},\bar{f})-\frac{1}{n+1}\int_{\partial\Sigma}\Pi(N,N)ds.\]
This implies that
\[
I(f,f)\geq\left(\frac{\int_{\Sigma}fdvol_{\Sigma}}{\int_{\Sigma}f_idvol_{\Sigma}}\right)^2\frac{1}{n+1}\int_{\partial \Sigma}\Pi(N,N) ds\,.
\]
It follows from Holder's inequality and $\sum_{i=1}^{n+1}f_i^2= |N|^2=1$ that
\[
\left( \int_{\Sigma}f_i dvol_{\Sigma}\right)^2=\left|\int_{\Sigma}f_i dvol_{\Sigma}\right|^2\leq\left( \int_{\Sigma}|f_i| dvol_{\Sigma}\right)^2\leq A^2\,.
\]
This finishes the proof.
\end{proof}

\section{Proof of Theorem \ref{main}}
\begin{proof}
Assume that $\Sigma$ is a stable free-boundary hypersurface in $B$. Consider then the support function
$u=<\psi,N>$ of $\Sigma$, where $\psi$ is the immersion of $\Sigma$ in $B$. It satisfies the following
\[
\left\{
  \begin{array}{ccccccc}
    \Delta u & + & |\sigma|^2u & = & -nH & on & \Sigma  \\
    u & = & 0 &  &  & on & \partial\Sigma \\
  \end{array}
\right.
\]
Moreover, taking the diverge of the tangent component $\psi-uN$ of $\psi$ is given by 
\[
div(\psi-uN)=n+nHu\,.
\]
It follows from the Divergence Theorem that
\begin{equation}\label{e1}
L=n\left(A+\int_{\Sigma}Hudvol_{\Sigma}  \right)\,.
\end{equation}
Since $u=0$ on $\partial \Sigma$, it follows from the stability of $\Sigma$ and Nunes Stability Lemma that
\[
nH\int_{\Sigma}udvol_{\Sigma}=\int_{\Sigma}|\nabla_{\Sigma} u|^2-|A_{\Sigma}|^2u^2dvol_{\Sigma}\geq0\,.
\]
Note that, if $H=0$, then $L=nA$. Assume that $H\neq 0$. First, as was done by Ros-Vergasta in \cite{R-V}, we will first prove that either $u\geq0$ or $u\leq0$ on $\Sigma$. Suppose, 
by contradiction, that $u$ changes sign. Consider $\Sigma^+$ (resp. $\Sigma^-$) the subset of $\Sigma$ where $u$ is positive (resp. negative)
and define $u^+$, $u^- \in H^1(\Sigma)$ by
\[
u^+(p)=\left\{
  \begin{array}{ccccc}
    u(p) & if & p & \in & \Sigma^+ \\
    0 & if & p & \in & \Sigma\setminus\Sigma^+ \\
  \end{array}
\right.
 and \,\,\,
u^-(p)=\left\{
  \begin{array}{ccccc}
    u(p) & if & p & \in & \Sigma^- \\
    0 & if & p & \in & \Sigma\setminus\Sigma^- \\
  \end{array}
\right.
\]
A direct computation gives 

\[
I(u^-,u^-)=nH\int_{\Sigma}u^-dvol_{\Sigma}  
\]
and
\[
I(u^+,u^+)=nH\int_{\Sigma}u^+dvol_{\Sigma}\,.
\]
Now we define $\tilde{u}=u^++au^-$, where $a$ is a positive constant such that $\int_{\Sigma}\tilde{u}dvol_{\Sigma}=0$. It follows that $\tilde{u}$ is not identically null and
\[
I(\tilde{u},\tilde{u})=-naH\int_{\Sigma}udvol_{\Sigma}\,.
\]
As in Ros-Vergasta \cite{R-V}, pag. 29, we obtain that either $u\geq0$ or $u\leq0$ on $\Sigma$. We can choose the orientation on $\Sigma$ such that $u\geq0$. Since $H\neq0$ and $\int_{\Sigma}HudA \geq0$, we get that $H>0$. Therefore, $u$ satisfies: $u\geq0$, $u=0$ on $\partial \Sigma$ and $ \Delta u = |\sigma|^2u-nH<0$. By the maximum principle for subharmonic functions we obtain that $u$ is strictly positive on $int \Sigma$.  This gives us that $\int_{\Sigma}udvol_{\Sigma}\neq0$. It follows from the Nunes 
Stability Lemma that
\begin{eqnarray*}
\int_{\Sigma}|\nabla_{\Sigma} u|^2-|A_{\Sigma}|^2u^2dvol_{\Sigma} &\geq&\frac{1}{n+1}\left( \frac{\int_{\Sigma}udvol_{\Sigma}}{A} \right)^2\int_{\partial\Sigma}\Pi(N,N)ds\\
&=&\frac{L}{n+1}\left( \frac{\int_{\Sigma}u\,dvol_{\Sigma}}{A} \right)^2\,.
\end{eqnarray*}
Hence, we obtain
\begin{equation}\label{e2}
n\int_{\Sigma}Hu\,dvol_{\Sigma} \geq\frac{L}{n+1}\left( \frac{\int_{\Sigma}u\,dvol_{\Sigma}}{A} \right)^2\,,
\end{equation}
since
\[
n\int_{\Sigma}Hu\,dvol_{\Sigma} = \int_{\Sigma}|\nabla_{\Sigma} u|^2-|A_{\Sigma}|^2u^2dvol_{\Sigma} \,.
\]
From (\ref{e1}) we have that
\[
 \frac{\int_{\Sigma}u\,dvol_{\Sigma}}{A} =\frac{L-nA}{nHA}\,.
\]
Then, from (\ref{e1}) and (\ref{e2}), we conclude that
\[
L=nA+nH\int_{\Sigma}u\,dvol_{\Sigma}\geq nA+\frac{L}{n+1}\left( \frac{\int_{\Sigma}u\,dvol_{\Sigma}}{A} \right)^2= nA+\frac{L}{n+1}\left( \frac{L-nA}{nHA} \right)^2\,.
\]
This implies that
\[
L-nA\geq \frac{L}{n+1}\left( \frac{L-nA}{nHA} \right)^2\,.
\]
Therefore,
\[
L^2-nA\,L-n^2A^2(n+1)H^2\leq0\,.
\]
This implies that
\[
L\leq nA\left( \frac{1+\sqrt{1+4(n+1)H^2}}{2} \right)\,.
\]

\end{proof}

\section{Proof of Corollary \ref{COR}}
\begin{proof}
As in the proof of Theorem 11  in \cite{R-V}, we obtain that $\partial \Sigma$ is embedded. Now, applying the Theorem \ref{main}, we obtain that $\Sigma$ is totally geodesic or starshaped with respect to the center of the ball. Since starshaped surfaces must have genus 0, we obtain that $\Sigma$ is totally geodesic or a spherical cap. 
\end{proof}

\end{document}